\newtheorem{theorem}{Theorem}
\newtheorem{thmy}{Theorem}
\newtheorem{lemma}[theorem]{Lemma}
\def\barr{\begin{array}}
\def\earr{\end{array}}
\title{On the solvability of a finite group by the sum of subgroup orders}
\author{Marius T\u arn\u auceanu}
\date{June 17, 2020}
\begin{document}

\maketitle

\begin{abstract}
Let $G$ be a finite group and $\sigma_1(G)=\frac{1}{|G|}\sum_{H\leq G}\,|H|$. Under some restrictions on the number of conjugacy classes of (non-normal) maximal subgroups of $G$, we prove that if $\sigma_1(G)<\frac{117}{20}\,$, then $G$ is solvable. This partially solves an open problem posed in \cite{9}.
\end{abstract}

{\small
\noindent
{\bf MSC2000\,:} Primary 20D60; Secondary 20D10, 20F16, 20F17.

\noindent
{\bf Key words\,:} subgroup orders, solvable groups.}

\section{Introduction}

Given a finite group $G$, we consider the function
$$\sigma_1(G)=\frac{1}{|G|}\sum_{H\leq G}\,|H|$$studied in our previous papers \cite{6,9}. Recall some basic properties of $\sigma_1$:
\begin{itemize}
\item[-] if $G$ is cyclic of order $n$ and $\sigma(n)$ denotes the sum of all divisors of $n$, then $\sigma_1(G)=\frac{\sigma(n)}{n}$\,;
\item[-] $\sigma_1$ is multiplicative, i.e. if $G_i$, $i=1,2,\dots,m$, are finite groups of coprime orders, then $\sigma_1(\prod_{i=1}^m G_i)=\prod_{i=1}^m \sigma_1(G_i)$;
\item[-] $\sigma_1(G)\geq\sigma_1(G/H)+\frac{1}{(G:H)}\,(\sigma_1(H)-1)\geq\sigma_1(G/H)$, for all $H\unlhd G$.
\end{itemize}

Let $k(G)$ and $k'(G)$ be the numbers of conjugacy classes of maximal subgroups of $G$ and of non-normal maximal subgroups of $G$, respectively. The starting point for our discussion is given by the open problem in \cite{9}, which asks to study whether there is a constant $c\in(2,\infty)$ such that $\sigma_1(G)<c$ implies the solvability of $G$. In the current note, we will show that if $k(G)\leq 3$ or $k'(G)\neq 3$, then such a constant is $\frac{117}{20}$\,.
\bigskip

For the proof of our results, we need the following three theorems from \cite{3} (see Theorems 2 and 1, respectively) and \cite{7} (see Exercise 7 of Section 10.5).

\begin{thmy}
A finite group $G$ with $k'(G)\leq 2$ is always solvable. In parti\-cu\-lar, a finite group $G$ with $k(G)\leq 2$ is always solvable.
\end{thmy}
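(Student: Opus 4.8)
The plan is to prove the contrapositive: every non-solvable finite group $G$ satisfies $k'(G)\ge 3$. Since every non-normal maximal subgroup is in particular a maximal subgroup, we have $k'(G)\le k(G)$, so the ``in particular'' clause is immediate once the main assertion is established; I therefore concentrate on the inequality $k'(G)\ge 3$ for non-solvable $G$.

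First I would record that $k'$ does not increase under passage to quotients: if $N\unlhd G$, the non-normal maximal subgroups of $G/N$ correspond to the non-normal maximal subgroups of $G$ containing $N$, and this correspondence respects conjugacy, whence $k'(G/N)\le k'(G)$. Armed with this, I pass to a counterexample $G$ of minimal order, i.e.\ a non-solvable group with $k'(G)\le 2$ and $|G|$ least. For any $1\ne N\unlhd G$ the quotient $G/N$ has $k'(G/N)\le 2$ and smaller order, hence is solvable by minimality. Two consequences follow. If $G$ had two distinct minimal normal subgroups $N_1,N_2$, then $N_1\cap N_2=1$ and $G$ would embed into the solvable group $(G/N_1)\times(G/N_2)$, a contradiction; so $G$ has a unique minimal normal subgroup $N$. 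Moreover $N$ cannot be abelian, for then $N$ and $G/N$ would both be solvable and so would $G$; thus $N=S_1\times\cdots\times S_k$ is a direct product of copies of a non-abelian simple group $S$.

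Next I would pin down the shape of $G$ around $N$. Because $N$ is the unique minimal normal subgroup and $Z(N)=1$, the centralizer $C_G(N)$ meets $N$ trivially and therefore vanishes, so conjugation embeds $G$ into $\operatorname{Aut}(N)\cong\operatorname{Aut}(S)\wr S_k$ with $G/N$ solvable. This splits the argument into the almost simple case $k=1$, where $S\le G\le\operatorname{Aut}(S)$, and the product case $k\ge 2$, where $G$ permutes the factors transitively. In each case the goal is to exhibit three pairwise non-conjugate non-normal maximal subgroups.

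The decisive and most laborious step is the almost simple case, and here I expect to lean on the classification of finite simple groups: one must verify that every non-abelian simple group, and more generally every almost simple group, possesses at least three conjugacy classes of maximal subgroups, none of which is normal. The borderline instance is $A_5$, whose maximal subgroups $A_4$, $D_{10}$ and $S_3$ form exactly three classes, so the constant $3$ is sharp and any viable argument must reproduce this count in the tightest cases, namely the minimal simple groups of Thompson's list $\operatorname{PSL}(2,2^p)$, $\operatorname{PSL}(2,3^p)$, $\operatorname{PSL}(2,p)$, $\operatorname{Sz}(2^p)$ and $\operatorname{PSL}(3,3)$. For the product case $k\ge 2$ I would produce the required classes from the ``diagonal'' and ``component-wise'' maximal subgroups of $S\wr S_k$ together with a block-type maximal subgroup, checking that the transitive action of $G/N$ on the factors cannot fuse all of them into fewer than three classes. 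Combining the two cases contradicts $k'(G)\le 2$ and eliminates the minimal counterexample, completing the proof.
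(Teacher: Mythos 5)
A point of calibration first: the paper does not prove this statement at all. It is Theorem A, imported as a black box from Belonogov \cite{3} (together with \cite{7}), and the paper's results are built on top of it. So there is no in-paper proof to compare yours against; your attempt has to be judged on its own terms, as an attempt at Belonogov's theorem itself.

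Judged that way, your write-up is a reduction skeleton, not a proof, and the gap sits exactly where the theorem's difficulty lies. Your preliminary steps are sound: $k'(G/N)\leq k'(G)$ does hold, since the maximal subgroups of $G/N$ are precisely the $M/N$ with $M$ maximal in $G$ containing $N$, and both normality and conjugacy correspond under this bijection; the minimal-counterexample analysis (unique minimal normal subgroup $N$, necessarily non-abelian, $C_G(N)=1$, hence $G\hookrightarrow \operatorname{Aut}(N)\cong \operatorname{Aut}(S)\wr S_k$) is standard and correct. But everything after that is deferred rather than proved. In the almost simple case you state that ``one must verify,'' using the classification of finite simple groups, that every almost simple group has at least three conjugacy classes of non-normal maximal subgroups --- that verification \emph{is} the theorem in this case (and note that by Schreier's conjecture $\operatorname{Out}(S)$ is always solvable, so your ``almost simple case with solvable top'' is in fact the class of all almost simple groups); you do not carry it out even for the minimal simple groups you list as the tight cases. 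In the product case $k\geq 2$ you gesture at ``diagonal,'' ``component-wise'' and ``block-type'' maximal subgroups of $S\wr S_k$, but the group in hand is only \emph{some} subgroup $G$ of $\operatorname{Aut}(S)\wr S_k$ containing $S^k$ and transitive on the factors; one must construct maximal subgroups of $G$ itself (say, as normalizers in $G$ of suitable subgroups of $N$), and then show they are maximal, non-normal, and pairwise non-conjugate --- none of which is done, and none of which is routine. As it stands, the proposal establishes only the (correct, but easy) fact that a minimal counterexample would be built from non-abelian simple groups; the substance of the statement remains unproved.
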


\begin{thmy}
A finite group $G$ with $k(G)=3$ is non-solvable if and only if either $G/\Phi(G)\cong {\rm PSL}(2,7)$ or $G/\Phi(G)\cong {\rm PSL}(2,2^p)$, where $p$ is a prime.
\end{thmy}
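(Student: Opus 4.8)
The plan is to transfer the hypotheses to the Frattini quotient $\bar G=G/\Phi(G)$. Since $\Phi(G)$ is the intersection of all maximal subgroups, reduction modulo $\Phi(G)$ sets up a conjugacy-preserving bijection between the maximal subgroups of $G$ and those of $\bar G$, so $k(\bar G)=k(G)=3$; and as $\Phi(G)$ is nilpotent, $G$ is solvable if and only if $\bar G$ is. It therefore suffices to show that a non-solvable $\bar G$ with $\Phi(\bar G)=1$ and $k(\bar G)=3$ is isomorphic to ${\rm PSL}(2,7)$ or ${\rm PSL}(2,2^p)$. For such a $\bar G$, Theorem A gives $k'(\bar G)\geq 3$; writing $k(\bar G)=k'(\bar G)+m$, where $m$ is the number of normal maximal subgroups, and using $k(\bar G)=3$, we obtain $k'(\bar G)=3$ and $m=0$, so $\bar G$ is perfect.

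Next I would show that $\bar G$ is simple by a counting argument on maximal subgroups. Let $N$ be a minimal normal subgroup of $\bar G$. Membership of $N$ splits the maximal subgroups into two conjugacy-invariant families; those containing $N$ correspond bijectively, and compatibly with conjugacy, to the maximal subgroups of $\bar G/N$, hence contribute exactly $k(\bar G/N)$ classes. Since $N\neq 1=\Phi(\bar G)$, at least one maximal subgroup does not contain $N$, contributing a further class, so $3=k(\bar G)\geq k(\bar G/N)+1$, that is $k(\bar G/N)\leq 2$. By Theorem A the quotient $\bar G/N$ is solvable; being also a quotient of the perfect group $\bar G$ it is perfect, hence trivial, which forces $N=\bar G$. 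As this holds for every minimal normal subgroup, $\bar G$ has no proper non-trivial normal subgroup, so it is simple, and being non-solvable it is non-abelian simple.

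It remains to determine which non-abelian simple groups have exactly three conjugacy classes of maximal subgroups, and for this I would invoke the classification of finite simple groups together with the known subgroup structure of each family. For $\bar G={\rm PSL}(2,q)$ the count is read off Dickson's list: when $q=2^f$ the maximal subgroups are the Borel subgroup of order $q(q-1)$, the dihedral groups of orders $2(q-1)$ and $2(q+1)$, and one subfield class ${\rm PSL}(2,2^{f/\ell})$ for each prime $\ell\mid f$, so $k=3$ precisely when $f$ is prime; when $q$ is odd the extra dihedral and $A_4,S_4,A_5$ classes give $k>3$ except at $q=7$, where the small dihedral subgroups are absorbed into the two classes of $S_4$ and exactly three classes remain (and $q=5$ recurs as ${\rm PSL}(2,4)$). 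For every other family — the alternating groups $A_n$ with $n\geq 6$, the remaining groups of Lie type, and the sporadic groups — one must check that at least four classes of maximal subgroups always occur.

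I expect this last check to be the main obstacle: it is genuinely dependent on the classification and has to be handled family by family, the delicate point being to make the bound $k\geq 4$ uniform over the infinite Lie-type families rather than verifying one group at a time, with the ${\rm PSL}(2,q)$ analysis above serving as a template for how parabolic subgroups and torus normalizers already produce enough classes. The converse is by contrast immediate: ${\rm PSL}(2,7)$ and ${\rm PSL}(2,2^p)$ are non-abelian simple, and the count above yields $k=3$ for each, so whenever $G/\Phi(G)$ is isomorphic to one of them, $G$ is non-solvable with $k(G)=3$.
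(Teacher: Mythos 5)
This statement is the paper's Theorem~B, which the paper does \emph{not} prove: it is imported as a known result from Belonogov's paper (reference [3], Theorem 1 there), and is used as a black box. So what you have attempted is a re-proof of a classification-level external theorem, and your attempt has a genuine gap exactly where the theorem's real content lies. Your reduction is fine and cleanly done: maximal subgroups of $G$ correspond bijectively (respecting conjugacy and normality) to those of $\bar G = G/\Phi(G)$, so $k(\bar G)=3$; non-solvability plus Theorem~A forces $k'(\bar G)=3$, hence no normal maximal subgroups, hence $\bar G$ perfect; and your counting argument with a minimal normal subgroup $N$ (using $\Phi(\bar G)=1$ so that some maximal subgroup misses $N$, giving $k(\bar G/N)\leq 2$, so $\bar G/N$ is solvable and perfect, hence trivial) correctly yields that $\bar G$ is non-abelian simple. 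But the step that remains — showing that ${\rm PSL}(2,7)$ and ${\rm PSL}(2,2^p)$ are the \emph{only} non-abelian simple groups with exactly three conjugacy classes of maximal subgroups — is precisely Belonogov's theorem, and you do not prove it: you verify via Dickson's list that the two named families do have $k=3$, and then state that for $A_n$ ($n\geq 6$), the other Lie-type families, and the sporadic groups ``one must check that at least four classes always occur,'' explicitly flagging this as the main obstacle. A proof that defers its central claim to an unexecuted case-by-case CFSG check is not a proof; everything before that point is the easy half of the argument.

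There is also a self-contradiction in your Dickson count for even $q=2^f$: you list ``one subfield class ${\rm PSL}(2,2^{f/\ell})$ for each prime $\ell\mid f$'' and then conclude $k=3$ precisely when $f$ is prime — but when $f$ is prime your own list would include the class of ${\rm PSL}(2,2)$ and give $k=4$. The resolution is that ${\rm PSL}(2,2)\cong S_3$ is \emph{not} maximal for $f>2$ (it sits inside a dihedral torus normalizer, since $3\mid q+1$ when $f$ is odd); subfield subgroups ${\rm PSL}(2,2^{f/\ell})$ are maximal only when $f/\ell>1$. This is a repairable slip, but it illustrates that even the ``easy'' verification half needs more care than the sketch provides; the unproved exclusion of all other simple groups is the fatal gap.
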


\begin{thmy}
A finite group with an abelian maximal subgroup is always solvable.
\end{thmy}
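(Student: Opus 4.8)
The plan is to induct on $|G|$, writing $A$ for an abelian maximal subgroup. Two reductions come first. If $A\trianglelefteq G$, then maximality forces $G/A$ to be a group with no proper nontrivial subgroups, hence cyclic of prime order; as $A$ is abelian, $G$ is metabelian and therefore solvable. If $A$ is not normal, set $N=\mathrm{core}_G(A)=\bigcap_{g\in G}A^g$. When $N\neq 1$, the image $A/N$ is an abelian maximal subgroup of the strictly smaller group $G/N$, so $G/N$ is solvable by induction; since $N\le A$ is abelian, the extension $G$ is solvable as well. This leaves the essential case: $A$ is a core-free, non-normal, abelian maximal subgroup.

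In that case I would exploit a self-centralizing phenomenon. For any $1\neq x\in A$ the centralizer $C_G(x)$ contains the abelian group $A$, so by maximality $C_G(x)$ equals $A$ or $G$; the latter would place $x$ in $Z(G)$, and a central element of $A$ lies in every conjugate $A^g$, hence in the core $N=1$, a contradiction. Thus $C_G(x)=A$ for every $1\neq x\in A$. From this I extract two facts: first, $N_G(A)=A$ (immediate from maximality together with non-normality); second, $A\cap A^g=1$ whenever $A^g\neq A$, for if $1\neq x\in A\cap A^g$ then $A^g$ centralizes $x$, giving $A^g\le C_G(x)=A$ and hence $A^g=A$. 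Therefore $A$ is a self-normalizing trivial-intersection subgroup.

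These are exactly the hypotheses of Frobenius's theorem, so $G$ is a Frobenius group with complement $A$: there is a normal Frobenius kernel $K$ with $G=K\rtimes A$ and $K\cap A=1$. I would then invoke Thompson's theorem that a Frobenius kernel is nilpotent. Consequently $K$ is nilpotent, hence solvable, while $A$ is abelian, hence solvable, so the extension $G$ is solvable, completing the induction.

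The main obstacle is the core-free case, and within it the passage from the purely internal condition $C_G(x)=A$ to the global Frobenius partition of $G$; the counting underlying Frobenius's normal-complement theorem, together with the appeal to the nilpotency of the kernel, is the substantive input, everything else being bookkeeping. An alternative ending would avoid Thompson by observing that a nonabelian minimal normal subgroup cannot serve as a Frobenius kernel, but routing the argument through Frobenius plus Thompson keeps the write-up shortest. Note that Theorems A and B are not needed here, since Theorem C is the more elementary statement.
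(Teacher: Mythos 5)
Your proof is correct, but there is nothing in the paper to compare it against: the paper does not prove Theorem~C at all, quoting it as a known result from Robinson's book (Exercise 7 of Section 10.5 in \cite{7}); it is Herstein's theorem. Your argument --- reduce to the case where the abelian maximal subgroup $A$ is non-normal and core-free, show $C_G(x)=A$ for all $1\neq x\in A$ and hence that $A$ is a self-normalizing TI subgroup, then apply Frobenius's theorem to get $G=K\rtimes A$ and Thompson's theorem to get $K$ nilpotent --- is precisely the standard intended solution of that exercise, and every step checks out (note that $A\neq 1$ is automatic in the core-free non-normal case, so the Frobenius hypothesis of nontrivial point stabilizers holds). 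The one loose point is your closing aside: the suggested Thompson-free ending is vaguer than you make it sound, since the Frobenius kernel need not be a minimal normal subgroup, and excluding a nonabelian kernel without Thompson's theorem (or at least Burnside's result for involutions plus a separate odd-order argument) is not routine; but as this is explicitly an alternative rather than part of the main proof, it does not affect correctness.
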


We note that similar problems for some other functions related to the structure of a finite group $G$, for example for the function $\psi(G)=\sum_{x\in G}o(x)$ (where $o(x)$ denotes the order of the element $x$), have been recently investigated by many authors (see e.g. \cite{1,2,5}).
\bigskip

Most of our notation is standard and will not be repeated here. Basic definitions and results on groups can be found in \cite{7}. For subgroup lattice concepts we refer the reader to \cite{8}.

\section{Main results}

We start with an easy but important lemma.

\begin{lemma}
    Let $G$ be a finite group and $[M]$ be a conjugacy class of non-normal maximal subgroups of $G$. Then
$$\sum_{H\in[M]}|H|=|G|.$$
\end{lemma}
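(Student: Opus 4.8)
The plan is to reduce the statement to a counting argument over the conjugacy class $[M]$, exploiting two standard facts about conjugate subgroups: they all share the same order, and the number of them is controlled by the normalizer. First I would recall that the conjugates of $M$ in $G$ are exactly the subgroups $gMg^{-1}$ for $g\in G$, that each such conjugate has $|gMg^{-1}|=|M|$, and that the number of distinct conjugates equals the index $[G:N_G(M)]$ of the normalizer $N_G(M)=\{g\in G : gMg^{-1}=M\}$. Consequently
$$\sum_{H\in[M]}|H| = \bigl|[M]\bigr|\cdot|M| = [G:N_G(M)]\cdot|M|,$$
so the whole problem collapses to identifying $N_G(M)$.

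The crux of the argument is the claim that $N_G(M)=M$. Here I would use both hypotheses on $M$. Since normalizers always contain the subgroup itself, we have the chain $M\leq N_G(M)\leq G$. Because $M$ is a \emph{maximal} subgroup of $G$, there is no proper intermediate subgroup, so $N_G(M)$ must equal either $M$ or $G$. If $N_G(M)=G$, then by definition $M$ would be normal in $G$, contradicting the assumption that $[M]$ consists of \emph{non-normal} maximal subgroups. Hence $N_G(M)=M$.

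Substituting $N_G(M)=M$ into the displayed identity gives
$$\sum_{H\in[M]}|H| = [G:M]\cdot|M| = |G|,$$
which is the desired conclusion. I do not expect any serious obstacle here: the lemma is genuinely elementary, and the only point demanding a moment of care is the normalizer computation, where one must invoke maximality to force the dichotomy $N_G(M)\in\{M,G\}$ and non-normality to discard the alternative $N_G(M)=G$. Everything else is a direct application of the orbit–stabilizer principle applied to the conjugation action of $G$ on its subgroups.
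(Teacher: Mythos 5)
Your proposal is correct and follows exactly the same route as the paper's proof: identify $N_G(M)=M$ via maximality and non-normality, then apply orbit--stabilizer to get $|[M]|=(G:M)$ and conclude $\sum_{H\in[M]}|H|=|M|(G:M)=|G|$. The only difference is cosmetic --- you spell out the dichotomy $N_G(M)\in\{M,G\}$ that the paper leaves implicit.
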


\begin{proof}
Since $M\subseteq N_G(M)$ and $M$ is not normal, we have $N_G(M)=M$. Therefore
$$|[M]|=(G:N_G(M))=(G:M),$$which leads to\newpage
$$\sum_{H\in[M]}|H|=|M||[M]|=|M|(G:M)=|G|,$$as desired.\qedhere
\end{proof}

We are now able to prove our first main result.

\begin{theorem}\label{th:C1}
    Let $G$ be a finite group with $k'(G)\neq 3$. If $\sigma_1(G)<\frac{117}{20}$, then $G$ is solvable.
\end{theorem}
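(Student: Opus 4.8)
The plan is to prove the contrapositive: assuming $G$ is non-solvable (and $k'(G)\neq 3$), I will show $\sigma_1(G)\geq\frac{117}{20}$. First I would extract the coarse lower bound coming from Lemma~1. Each conjugacy class of non-normal maximal subgroups contributes exactly $|G|$ to $\sum_{H\leq G}|H|$, i.e.\ exactly $1$ to $\sigma_1(G)$; adding the full group and the trivial subgroup gives $\sigma_1(G)\geq 1+k'(G)+\frac{1}{|G|}$. By Theorem~A non-solvability forces $k'(G)\geq 3$, and the hypothesis $k'(G)\neq 3$ upgrades this to $k'(G)\geq 4$. Hence $\sigma_1(G)>1+k'(G)\geq 5$, and as soon as $k'(G)\geq 5$ we already get $\sigma_1(G)>6>\frac{117}{20}$. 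So the whole problem collapses to the single case $k'(G)=4$, where the coarse bound only yields $\sigma_1(G)>5$ and I must still produce an extra $\frac{117}{20}-5=\frac{17}{20}$.

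For that case I would first reduce to $\Phi(G)=1$: the maximal subgroups of $G$ all contain $\Phi(G)$ and correspond, preserving index, conjugacy and normality, to those of $G/\Phi(G)$, so $k'(G/\Phi(G))=4$ and $G/\Phi(G)$ is still non-solvable, while $\sigma_1(G)\geq\sigma_1(G/\Phi(G))$ means it suffices to treat the case $\Phi(G)=1$. A $\Phi$-free non-solvable group has a transparent socle structure (an elementary-abelian part acted on by a non-solvable group, together with any non-abelian simple factors), which constrains the four classes of non-normal maximal subgroups enough to enumerate the surviving candidates; the model case to keep in mind is $G=\mathrm{PSL}(2,16)$, which genuinely realizes $k'(G)=4$.

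The remaining $\frac{17}{20}$ has to be harvested from the proper \emph{non-maximal} subgroups. Here Theorem~C is decisive: since $G$ is non-solvable no maximal subgroup can be abelian, so each of the four maximal subgroups carries a rich lattice of proper subgroups lying strictly below it, all of which are non-maximal in $G$ and hence not yet counted. Combined with Feit--Thompson ($|G|$ is even) and Burnside ($|G|$ is divisible by at least three primes), I would lower-bound the contribution of the $2$-subgroups (the Sylow $2$-subgroups and everything inside them) together with the cyclic subgroups of the odd prime orders. In the model case this alone is decisive: in $\mathrm{PSL}(2,16)$ the seventeen mutually trivially-intersecting Sylow $2$-subgroups already contribute comfortably more than $\frac{17}{20}$ to $\sigma_1$, closing the gap.

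The hard part is making this last step uniform. A single maximal subgroup is far too lossy: even using $\sigma_1(M)\geq\frac{8}{3}$ for a non-abelian $M$ of smallest index $n$ only gives an extra $\frac{1}{n}\cdot\frac{5}{3}$, which stays below $\frac{17}{20}$ as soon as $n\geq 3$ (and a non-normal maximal subgroup has index $n\geq 3$, index $2$ being automatically normal). So the extra mass must come from subgroups whose orders sum to a fixed fraction of $|G|$ — the second-maximal subgroups and the Sylow families — and controlling these simultaneously, without over-counting the subgroups shared between different maximal subgroups, is the real obstacle. I expect to resolve it by combining the $\Phi$-free reduction above with a short, explicit check of the finitely many non-solvable groups that survive as candidates with exactly four classes of non-normal maximal subgroups.
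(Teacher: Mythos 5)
Your reduction to the case $k'(G)=4$ is correct and matches the easy half of the paper's argument: Lemma~1 plus Theorem~A gives $\sigma_1(G)>1+k'(G)$, which indeed settles everything except $k'(G)=4$. But at that point your proof genuinely stops. The extra $\frac{17}{20}$ is never produced: you propose a Frattini-quotient reduction, a socle analysis, and finally ``a short, explicit check of the finitely many non-solvable groups'' with exactly four classes of non-normal maximal subgroups --- but that check is not carried out, and its premise is false. There are infinitely many such groups: for every prime $p$, the simple group $\mathrm{PSL}(2,2^{p^2})$ has exactly four conjugacy classes of maximal subgroups (the Borel subgroup, the dihedral subgroups of orders $2(q-1)$ and $2(q+1)$, and the subfield subgroup $\mathrm{PSL}(2,2^p)$), all non-normal by simplicity. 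So closing the case $k'(G)=4$ by enumeration would require handling infinite families, i.e.\ a computation in the spirit of the paper's Lemma~3, and your sketch contains neither that computation nor a substitute for it.

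The idea you are missing is the paper's one-line device for harvesting the extra mass uniformly: count \emph{all cyclic subgroups} of $G$ at once. Since $G$ is non-solvable, Theorem~C says no maximal subgroup is abelian, hence none is cyclic, and $G$ itself is not cyclic; so the cyclic subgroups are disjoint from $G$ and from every conjugate of the four maximal classes already counted. Moreover
$$\sum_{H\leq G,\ H\ \mathrm{cyclic}}|H|=\sum_{a\in G}\frac{o(a)}{\phi(o(a))}\geq\sum_{a\in G}1=|G|,$$
because a cyclic subgroup of order $m$ has exactly $\phi(m)$ generators. This yields a full extra $1$ (not merely $\frac{17}{20}$), so $\sigma_1(G)\geq 1+4+1=6>\frac{117}{20}$ for every $k'(G)\geq 4$, with no Frattini reduction, no socle analysis, and no separate case $k'(G)=4$ at all. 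Your instinct to mine the non-maximal subgroups via Theorem~C was exactly right; the gap is that you never found a way to make that harvest uniform, and the cyclic-subgroup count is precisely that way.
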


\begin{proof}
For $k'(G)\leq 2$ the conclusion follows by Theorem A.

Assume that $k'(G)\geq 4$ and $\sigma_1(G)<\frac{117}{20}\,$, but $G$ is not solvable. Then it has no cyclic maximal subgroup by Theorem C. Let $[M_i]$, $i=1,2,3,4$, be four distinct conjugacy classes of non-normal maximal subgroups of $G$. We infer that
$$\sigma_1(G)\geq\frac{1}{|G|}\left(|G|+\sum_{i=1}^4\sum_{H\in[M_i]}|H|+\!\!\!\sum_{H\leq G,\, H=\text{\,cyclic}}\!\!\!|H|\right).$$From Lemma 1 we have $$\sum_{i=1}^4\sum_{H\in[M_i]}|H|=\sum_{i=1}^4 |G|=4|G|.$$Also, Theorem 2 of \cite{3} shows that
$$\sum_{H\leq G,\, H=\text{\,cyclic}}\!\!\!|H|=\sum_{a\in G}\frac{o(a)}{\phi(o(a))}\geq\sum_{a\in G}1=|G|\,.$$Then
$$\sigma_1(G)\geq\frac{1}{|G|}\left(|G|+4|G|+|G|\right)=6>\frac{117}{20}\,,$$a contradiction.\qedhere
\end{proof}

Next we will focus on proving our second main result. The following lemma will be helpful to us.

\begin{lemma}
    We have:
\begin{itemize}
\item[{\rm a)}] $\sigma_1({\rm PSL}(2,7))>\frac{117}{20}\,$;
\item[{\rm b)}] $\sigma_1({\rm PSL}(2,2^p))\geq\frac{117}{20}\,$, and the equality holds if and only if $p=2$.\newpage
\end{itemize}
\end{lemma}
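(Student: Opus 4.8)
The plan is to reduce both parts to explicit subgroup counts, exploiting that $\mathrm{PSL}(2,7)$ and $\mathrm{PSL}(2,2^p)$ are simple groups whose subgroup lattices are completely described by Dickson's classification. Throughout I will use Lemma~1 (each conjugacy class of non-normal maximal subgroups contributes exactly $|G|$ to $\sum_{H\le G}|H|$) together with the cyclic-subgroup estimate $\sum_{H\text{ cyclic}}|H|=\sum_{a\in G}o(a)/\phi(o(a))\ge|G|$ used in the proof of Theorem~\ref{th:C1}.

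For part (a), $\mathrm{PSL}(2,7)$ has order $168$ and three conjugacy classes of (non-normal) maximal subgroups: two classes of copies of $S_4$ and one class of Frobenius groups of order $21$. By Lemma~1 these already contribute $3|G|$, and together with $G$ itself and the cyclic subgroups this gives $\sigma_1\ge 5$. The remaining proper subgroups (the Klein four-groups, the cyclic groups of order $4$, the copies of $S_3$, the dihedral Sylow $2$-subgroups of order $8$, and the copies of $A_4$) are numerous, so adding their orders pushes the total far past $\tfrac{117}{20}=5.85$; a direct count of the full lattice in fact yields a value close to $9$.

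For part (b), recall (Theorem~B / Dickson) that with $q=2^p$ the group $\mathrm{PSL}(2,2^p)$ is simple with exactly three classes of maximal subgroups: a Borel subgroup of order $q(q-1)$ and two dihedral groups of orders $2(q-1)$ and $2(q+1)$; all are non-normal, so Lemma~1 again yields $\sigma_1\ge 5$. For $p=2$, where $\mathrm{PSL}(2,4)\cong A_5$, I enumerate the lattice directly ($15$ subgroups $C_2$, $10$ subgroups $C_3$, $6$ subgroups $C_5$, $5$ Klein four-groups, $10$ copies of $S_3$, $6$ copies of $D_{10}$, $5$ copies of $A_4$) and obtain $\sigma_1(A_5)=\tfrac{351}{60}=\tfrac{117}{20}$, the equality case. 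For $p\ge 3$ the strict inequality comes from two contributions that are pairwise disjoint from $G$, the three maximal classes, and from each other. First, the cyclic contribution $A=\sum_{a}o(a)/\phi(o(a))$ is computed from the torus structure: the $\tfrac{q(q+1)}{2}$ split tori (cyclic of order $q-1$) and the $\tfrac{q(q-1)}{2}$ nonsplit tori (cyclic of order $q+1$) contribute $\tfrac{\sigma(q-1)-1}{2(q-1)}$ and $\tfrac{\sigma(q+1)-1}{2(q+1)}$ to $A/|G|$, while the $q^2-1$ involutions contribute $2/q$. Second, since the Sylow $2$-subgroups are elementary abelian of order $q=2^p$ and intersect pairwise trivially, the orders of their \emph{noncyclic} subgroups (subspaces of $\mathbb{F}_2^{\,p}$ of dimension $\ge 2$, counted by Gaussian binomials) sum to a further term $B_2/|G|$. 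This gives the clean bound $\sigma_1\ge 4+A/|G|+B_2/|G|$, which is tight at $p=2$ and strict for $p\ge3$.

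The main obstacle is that these estimates land just barely below $\tfrac{117}{20}$, precisely because $p=2$ is a genuine equality case, so the threshold must be cleared uniformly in $p$ with some care. Using only the $2$-dimensional subspaces gives $B_2/|G|\ge\tfrac{2(q-2)}{3q}$, and then $\tfrac2q+\tfrac76+\tfrac{2(q-2)}{3q}\to\tfrac{11}{6}<\tfrac{37}{20}=\tfrac{117}{20}-4$, which is insufficient for large $p$. The remedy is to retain the higher-dimensional subspaces, whose number grows super-exponentially: for $p\ge5$ (so that $3\mid q+1$ with the four divisors $1,3,(q+1)/3,q+1$ of $q+1$ distinct) one gets $A/|G|\ge\tfrac2q+\tfrac76$ and, from the $3$-dimensional subspaces alone, $B_2/|G|\ge\tfrac{(q-2)(q-4)}{21q}\ge\tfrac54$, whence $\sigma_1>4+\tfrac76+\tfrac54>\tfrac{117}{20}$. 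The single case $p=3$ ($q=8$) I settle by the exact values $A/|G|=\tfrac{715}{504}$ and $B_2/|G|=\tfrac{9}{14}$, giving $\sigma_1\ge\tfrac{3055}{504}>\tfrac{117}{20}$. Assembling the cases $p=2$, $p=3$, and $p\ge5$ completes the argument.
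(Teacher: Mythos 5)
Your proof is correct, and its skeleton is the same as the paper's: Dickson's classification of the subgroups of $\mathrm{PSL}(2,2^p)$, Lemma~1 applied to the three classes of maximal subgroups (non-normal by simplicity), and a lower bound on $\sigma_1$ assembled from pairwise disjoint families of subgroups, with $A_5$ checked by direct enumeration. The difference lies in how the estimate is closed for $p\geq 3$. The paper keeps the Gaussian-binomial terms $i=1,2,3$ together with the crude divisor bounds $\sum_{m\mid q-1,\,m\neq 1}m\geq q-1$ and $\sum_{m\mid q+1,\,m\neq 1}m\geq q+1$, obtaining the single uniform inequality $\sigma_1\geq 5+\frac{q^2+8q+22}{21q}>\frac{117}{20}$ for all $q\geq 8$, with no case distinction. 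You instead split cases: for $p\geq 5$ you exploit the arithmetic fact $3\mid 2^p+1$ for odd $p$ to push the cyclic contribution up to $\frac{7}{6}$, and use the $3$-dimensional subspaces alone to get $\frac{(q-2)(q-4)}{21q}\geq\frac{5}{4}$; the case $p=3$, where both of these bounds degenerate, you settle by exact computation ($\sigma_1\geq\frac{3055}{504}$). Your route costs a case split the paper avoids, but it buys something the paper does not make explicit: your inequality $\sigma_1\geq 4+A/|G|+B_2/|G|$ is \emph{exactly} attained at $p=2$, which explains structurally why $A_5$ is the unique equality case and why the constant $\frac{117}{20}$ is sharp; your observation that $2$-dimensional subspaces alone are asymptotically insufficient ($\frac{11}{6}<\frac{37}{20}$) correctly diagnoses why some higher-dimensional term must be retained on any such route.

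One caveat on part~a): the paper simply quotes the GAP value $\sigma_1(\mathrm{PSL}(2,7))=\frac{1499}{168}$, whereas you stop at the rigorous bound $\sigma_1\geq 5$ and then assert that the remaining subgroups push the sum ``far past'' $\frac{117}{20}$, deferring to an uncited count ``close to $9$.'' As written this is an appeal to computation no stronger than the paper's, and the qualitative claim should be made quantitative. Fortunately your own list closes it in one line: the $21$ dihedral Sylow $2$-subgroups of order $8$ are non-cyclic, non-maximal and distinct from the subgroups already counted, so they contribute $21\cdot 8=168=|G|$ more, giving $\sigma_1(\mathrm{PSL}(2,7))\geq 6>\frac{117}{20}$.
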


\begin{proof}
\noindent\begin{itemize}
\item[{\rm a)}] By using GAP, we get $\sigma_1({\rm PSL}(2,7))=\frac{1499}{168}>\frac{117}{20}\,$, as desired.
\item[{\rm b)}] Assume first that $p\geq 3$ and let $q=2^p$. Then, by \cite{4}, ${\rm PSL}(2,q)$ has:
\begin{itemize}
\item[-] one subgroup of order $1$, namely the trivial subgroup;
\item[-] one subgroup of order $q^3-q$, namely ${\rm PSL}(2,q)$;
\item[-] three conjugacy classes of maximal subgroups $[M_i]$, $i=1,2,3$;
\item[-] $\frac{q(q+1)}{2}$ cyclic subgroups of order $m$, for every divisor $m\neq 1$ of $q-1$;
\item[-] $\frac{q(q-1)}{2}$ cyclic subgroups of order $m$, for every divisor $m\neq 1$ of $q+1$;
\item[-] $(q+1)\displaystyle\binom{p}{i}_{\!\!2}$\!\! elementary abelian subgroups of order $2^i$, for every $i=1,2,...,p$, where $$\displaystyle\binom{p}{i}_{\!\!2}=\frac{(2^p-1)\cdots(2-1)}{(2^i-1)\cdots(2-1)(2^{p-i}-1)\cdots(2-1)}$$is the Gaussian binomial coefficient.
\end{itemize}Note that every $M_i$ is non-normal because ${\rm PSL}(2,q)$ is a simple group. Thus, by Lemma 1, we have
$$\sum_{H\in[M_i]}|H|=|{\rm PSL}(2,q)|=\!q^3-q.$$It follows that
$$\sigma_1({\rm PSL}(2,q))\geq\frac{1}{q^3-q}\left[1+4(q^3-q)+\frac{q(q+1)}{2}\!\!\!\sum_{m|q-1,\, m\neq 1}\!\!\!m\,+\right.$$ $$\left.\hspace{-40mm}+\,\frac{q(q-1)}{2}\!\!\!\sum_{m|q+1,\, m\neq 1}\!\!\!m+(q+1)\sum_{i=1}^p\binom{p}{i}_{\!\!2}2^i\right]$$
$$\hspace{20mm}\geq\frac{1}{q^3-q}\left[1+4(q^3-q)+\frac{q(q+1)}{2}\,(q-1)\,+\right.$$ $$\left.\hspace{-14mm}+\,\frac{q(q-1)}{2}\,(q+1)+(q+1)\left(2\binom{p}{1}_{\!\!2}+4\binom{p}{2}_{\!\!2}+8\binom{p}{3}_{\!\!2}\right)\right]$$
$$\hspace{7mm}=5+\frac{1+(q+1)(q-1)\displaystyle\frac{q^2+8q+22}{21}}{q^3-q}$$
$$>5+\frac{(q+1)(q-1)\displaystyle\frac{q^2+8q+22}{21}}{q^3-q}$$
$$\hspace{5mm}=5+\frac{q^2+8q+22}{21q}>\frac{117}{20} \mbox{ for } q\geq 8,$$as desired.

Assume now that $p=2$. Then ${\rm PSL}(2,2^p)\cong A_5$ and we can easily check that $\sigma_1(A_5)=\frac{117}{20}\,$, completing the proof. \qedhere
\end{itemize}
\end{proof}

\begin{theorem}\label{th:C1}
    Let $G$ be a finite group with $k(G)=3$. If $\sigma_1(G)<\frac{117}{20}$, then $G$ is solvable.
\end{theorem}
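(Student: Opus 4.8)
The plan is to argue by contradiction, in close parallel to the proof of the first main result, but replacing the elementary counting bound there by the structural classification of Theorem B together with the explicit estimates of Lemma 2. Suppose $G$ is a finite group with $k(G)=3$ and $\sigma_1(G)<\frac{117}{20}$, and suppose for contradiction that $G$ is not solvable. Since $k(G)=3$ and $G$ is non-solvable, Theorem B applies directly and tells us that either $G/\Phi(G)\cong{\rm PSL}(2,7)$ or $G/\Phi(G)\cong{\rm PSL}(2,2^p)$ for some prime $p$.

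The key observation is that the Frattini subgroup $\Phi(G)$ is normal in $G$, so the monotonicity property recorded in the third bullet of the introduction applies with $H=\Phi(G)$, giving
$$\sigma_1(G)\geq\sigma_1(G/\Phi(G)).$$
Thus it suffices to bound $\sigma_1$ from below on each of the two possible quotients, and this is exactly the content of Lemma 2. In the first case Lemma 2(a) yields $\sigma_1(G/\Phi(G))=\sigma_1({\rm PSL}(2,7))>\frac{117}{20}$, hence $\sigma_1(G)>\frac{117}{20}$. In the second case Lemma 2(b) yields $\sigma_1(G/\Phi(G))=\sigma_1({\rm PSL}(2,2^p))\geq\frac{117}{20}$, hence $\sigma_1(G)\geq\frac{117}{20}$. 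In either case we contradict the hypothesis $\sigma_1(G)<\frac{117}{20}$, and therefore $G$ must be solvable.

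I would flag that the only delicate point is the boundary case $p=2$, i.e. $G/\Phi(G)\cong A_5$, where Lemma 2(b) gives only the non-strict inequality $\sigma_1(G/\Phi(G))=\frac{117}{20}$; it is precisely here that the \emph{strict} hypothesis $\sigma_1(G)<\frac{117}{20}$ is indispensable, since the weaker conclusion $\sigma_1(G)\geq\frac{117}{20}$ already suffices to exclude this case. Apart from this, all the substantive work has been front-loaded into the earlier results: the GAP evaluation and the Gaussian-binomial subgroup-counting estimate inside Lemma 2, and the group-theoretic classification supplied by Theorem B. Once these are in hand, the present theorem reduces to a single application of normal-quotient monotonicity, so I do not expect any genuine obstacle to remain.
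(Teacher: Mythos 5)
Your proof is correct, and it takes a genuinely more direct route than the paper's. The paper proceeds by minimal counterexample: choosing $G$ of least order, it applies Theorem B, then splits on whether $\Phi(G)$ is trivial. When $\Phi(G)\neq 1$ it uses $\sigma_1(G/\Phi(G))\leq\sigma_1(G)<\frac{117}{20}$ plus minimality (implicitly also the fact that $k(G/\Phi(G))=k(G)=3$, since every maximal subgroup contains $\Phi(G)$) to conclude that $G/\Phi(G)$ is solvable, hence that $G$ is solvable because $\Phi(G)$ is nilpotent, a contradiction; this forces $\Phi(G)=1$, so that $G$ itself is isomorphic to ${\rm PSL}(2,7)$ or ${\rm PSL}(2,2^p)$, and only then does it invoke the lemma on the $\sigma_1$-values of these simple groups (Lemma 3 in the paper's numbering, your ``Lemma 2''). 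You dispense with the induction entirely: since that lemma already bounds $\sigma_1$ of the two possible quotients, the monotonicity property $\sigma_1(G)\geq\sigma_1(G/\Phi(G))$ for the normal subgroup $\Phi(G)$ yields $\sigma_1(G)\geq\frac{117}{20}$ in one step, whether or not $\Phi(G)$ is trivial. This is precisely the pattern the paper itself uses later in its characterization of $A_5$, so you stay entirely within its toolkit. Your route buys brevity and sidesteps the unstated verification that the quotient again satisfies the hypothesis $k=3$, which the paper's appeal to minimality requires; conversely, the induction in the paper buys nothing that your argument lacks, and your proof shows it is unnecessary. Your closing remark is also on point: in the boundary case $p=2$ the non-strict bound $\sigma_1(G)\geq\frac{117}{20}$ still contradicts the strict hypothesis, so no case is lost.
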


\begin{proof}
Assume that the statement is false and let $G$ be a counterexample of minimal order. Then Theorem B leads to $$G/\Phi(G)\cong {\rm PSL}(2,7) \mbox{ or } G/\Phi(G)\cong {\rm PSL}(2,2^p), \mbox{ where } p \mbox{ is a prime.}$$ If $\Phi(G)\neq 1$, then
$$\sigma_1(G/\Phi(G))\leq\sigma_1(G)<\frac{117}{20}$$implies that $G/\Phi(G)$ is solvable by the minimality of $|G|$. Since $\Phi(G)$ is nilpotent, and consequently solvable, it follows that $G$ is also solvable, contradicting our assumption. Thus $\Phi(G)=1$, that is $$G\cong {\rm PSL}(2,7) \mbox{ or } G\cong {\rm PSL}(2,2^p) \mbox{ for a certain prime } p,$$and Lemma 3 implies that $\sigma_1(G)\geq\frac{117}{20}\,$, a contradiction. \qedhere
\end{proof}

Using Theorem B and Lemma 3, we also infer the following characterization of $A_5$.

\begin{theorem}\label{th:C1}
    Let $G$ be a non-solvable finite group with $k(G)=3$. If $\sigma_1(G)=\frac{117}{20}$, then $G\cong A_5$.
\end{theorem}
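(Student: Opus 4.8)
The plan is to mimic the proof of the preceding theorem, now extracting its equality case. Since $G$ is non-solvable with $k(G)=3$, Theorem B gives $G/\Phi(G)\cong\mathrm{PSL}(2,7)$ or $G/\Phi(G)\cong\mathrm{PSL}(2,2^p)$ for some prime $p$. Applying the third basic property of $\sigma_1$ with $H=\Phi(G)$ (which is normal in $G$) yields
$$\sigma_1(G)\geq\sigma_1(G/\Phi(G))+\frac{1}{(G:\Phi(G))}\left(\sigma_1(\Phi(G))-1\right)\geq\sigma_1(G/\Phi(G)),$$
so I can compare $\frac{117}{20}=\sigma_1(G)$ against the values recorded in Lemma 3.

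First I would dispose of the $\mathrm{PSL}(2,7)$ case: there $\sigma_1(G)\geq\sigma_1(\mathrm{PSL}(2,7))>\frac{117}{20}$ by Lemma 3(a), contradicting $\sigma_1(G)=\frac{117}{20}$. Hence $G/\Phi(G)\cong\mathrm{PSL}(2,2^p)$. Now Lemma 3(b) gives $\sigma_1(\mathrm{PSL}(2,2^p))\geq\frac{117}{20}$, so the chain
$$\frac{117}{20}=\sigma_1(G)\geq\sigma_1(G/\Phi(G))=\sigma_1(\mathrm{PSL}(2,2^p))\geq\frac{117}{20}$$
must consist of equalities throughout. In particular $\sigma_1(\mathrm{PSL}(2,2^p))=\frac{117}{20}$, and the equality clause of Lemma 3(b) forces $p=2$, whence $G/\Phi(G)\cong\mathrm{PSL}(2,4)\cong A_5$.

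It then remains to rule out a nontrivial Frattini subgroup. The collapse of the chain also gives $\sigma_1(G)=\sigma_1(G/\Phi(G))$, so the first displayed inequality becomes an equality, forcing $\frac{1}{(G:\Phi(G))}(\sigma_1(\Phi(G))-1)\leq 0$, i.e.\ $\sigma_1(\Phi(G))\leq 1$. Since any finite group $K$ satisfies $\sigma_1(K)\geq 1$, with equality exactly when $K=1$ (for $K\neq 1$ the trivial and full subgroups alone contribute $1+|K|>|K|$), this yields $\Phi(G)=1$ and therefore $G\cong A_5$. I expect the only delicate point to be this last step: extracting $\Phi(G)=1$ from the equality case, which relies on the strict (middle) version of the third basic property rather than on its weakened consequence. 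Everything else is a direct appeal to Theorem B and Lemma 3.
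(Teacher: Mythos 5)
Your proof is correct and takes essentially the same route as the paper's: Theorem B together with Lemma 3 forces $G/\Phi(G)\cong A_5$, and then the refined inequality $\sigma_1(G)\geq\sigma_1(G/\Phi(G))+\frac{1}{(G:\Phi(G))}\,(\sigma_1(\Phi(G))-1)$ yields $\sigma_1(\Phi(G))=1$, hence $\Phi(G)=1$ and $G\cong A_5$. The only differences are cosmetic: you handle the ${\rm PSL}(2,7)$ case explicitly and spell out why $\sigma_1(K)=1$ implies $K=1$, details the paper leaves implicit.
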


\begin{proof}
Under our hypotheses, we have $G/\Phi(G)\cong {\rm PSL}(2,7)$ or $G/\Phi(G)\cong {\rm PSL}(2,2^p)$, where $p$ is a prime, by Theorem B.

If $p>2$, then from Lemma 3 it follows that $\sigma_1(G/\Phi(G))>\frac{117}{20}\,$. Therefore\newpage $$\frac{117}{20}=\sigma_1(G)\geq\sigma_1(G/\Phi(G))>\frac{117}{20}\,,$$a contradiction. Thus $p=2$, that is $G/\Phi(G)\cong A_5$, and so $\sigma_1(G)=\sigma_1(G/\Phi(G))$. On the other hand, we know that $$\sigma_1(G)\geq\sigma_1(G/\Phi(G))+\frac{1}{(G:\Phi(G))}\,(\sigma_1(\Phi(G))-1).$$Then $\sigma_1(\Phi(G))=1$, i.e. $\Phi(G)$ is trivial. Consequently, $G\cong A_5$, completing the proof. \qedhere
\end{proof}

Finally, we formulate a natural problem concerning our study.
\bigskip

\noindent{\bf Open problem.} Let $G$ be an \textit{arbitrary} finite group with $\sigma_1(G)<\frac{117}{20}\,$. Is it true that $G$ is solvable?
\bigskip

Note that if the condition $\sigma_1(G)<\frac{117}{20}\,$ does not imply the solvability of $G$, then a counterexample $G$ of minimal order would be a just non-solvable group with $k'(G)=3$ by Theorem 2 and $k(G)\geq 4$ by Theorem 4. Thus $G$ would contain at least one normal maximal subgroup. Also, $G$ would be a Fitting-free group, that is it has no non-trivial solvable normal subgroup, or equivalently it has no non-trivial abelian normal subgroup.

\vspace*{5ex}\small

\hfill
\begin{minipage}[t]{5cm}
Marius T\u arn\u auceanu \\
Faculty of  Mathematics \\
``Al.I. Cuza'' University \\
Ia\c si, Romania \\
e-mail: {\tt tarnauc@uaic.ro}
\end{minipage}

\end{document}